\title[A note on DH $\cap$ co-DH]{A note on distance-hereditary graphs whose complement is also distance-hereditary}
\author{Hugo Jacob}
\thanks{LIRMM, Université de Montpellier, CNRS, France.}
\thanks{Research supported by ANR project GODASse ANR-24-CE48-4377.}
\date{}
\def\DHcoDH{DH $\cap$ co-DH}
\theoremstyle{definition}
\newtheorem{lemma}{Lemma}
\newtheorem{theorem}{Theorem}
\newtheorem{observation}{Observation}
\newtheorem{corollary}{Corollary}
\begin{document}

\begin{abstract}
Distance-hereditary graphs are known to be the graphs that are totally
decomposable for the split decomposition. We characterise distance-hereditary
graphs whose complement is also distance-hereditary by their split
decomposition and by their modular decomposition.
\end{abstract}
\maketitle

\vspace{10pt}

Distance-hereditary graphs\footnote{We will use common graph theoretic
notations mostly coherent with Diestel's textbook \cite{DiestelTextbook}.
$N[x]$ denotes $N(x) \cup \{x\}$, the \emph{closed neighbourhood} of $x,$ and
$P_k$ is the path of order $k$.}
constitute a well-known hereditary\footnote{A class of graphs is
\emph{hereditary} if it is closed under induced subgraphs.} class of
graphs which admits many characterisations.
Their name comes from their characterisation as graphs whose connected induced
subgraphs preserve distances. They are also characterised by a simple list of
forbidden induced subgraphs: holes, the house, the domino, and the gem (see
\cref{fig:HHDG}) \cite{BANDELT86}.
As such, distance-hereditary graphs are also called HHDG-free
graphs. They also admit a characterisation via a sequence of eliminations of
twins and pendant vertices \cite{BANDELT86}. Two vertices $u,v$ are
\emph{twins} if $N(u)\setminus \{u,v\} = N(v)\setminus \{u,v\}$, a
\emph{pendant} vertex is a vertex of degree $1$. Finally, a graph is
distance-hereditary if it is totally decomposable by the split decomposition.
This last characterisation will be explained in further detail later.

\begin{figure}[h]
\centering
\includegraphics[scale=0.8]{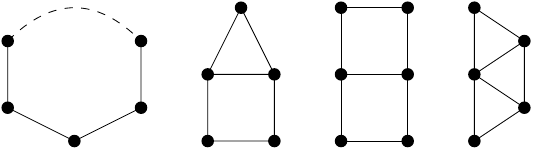}
\caption{The forbidden induced subgraphs corresponding to the class of
distance-hereditary. From left to right: holes (cycles of length at least $5$),
house, domino, and gem.}
\label{fig:HHDG}
\end{figure}

The split decomposition was first introduced by Cunningham \cite{SplitDecIntro}.
We briefly introduce some notation, see e.g. \cite{GioanP12} for a more
detailed presentation.
Given a connected graph $G=(V,E)$, a \emph{split} is a
bipartition $(A,B)$ of $V$ such that $|A|,|B| \geq 2$ and such that there are
all edges between $N(A)$ and $N(B)$. A split $(A,B)$ is called \emph{strong} if
it does not overlap another split $(A',B')$, i.e. one of $A \cap A'$, $B \cap
A'$, $A \cap B'$, $B \cap B'$ must be empty. Decomposing using strong splits
produces a unique decomposition, which we call the (canonical) split
decomposition. The \emph{split decomposition} $(T,\mu)$ of a graph $G$ can be
described via an \textbf{undirected} tree $T$ whose leaves are bijectively
mapped to $V(G)$ (see examples in \cref{fig:P4Bull}).
Each internal node $t$ of $T$ is labelled by an induced
subgraph $\mu(t)$ of $G$ corresponding to picking\footnote{The choice is not
arbitrary. A suitable vertex may be found by following a path which alternates
between edges of $T$ and edges of graphs labelling nodes of $T$, see also
\cref{lem:adj-split-dec}. For instance, consider the split decomposition of the
bull illustrated in \cref{fig:P4Bull}. Let $t$ be the internal node adjacent to
the leaf mapped to vertex $e$. The interface vertices of $\mu(t)$ may be
replaced by vertices $b$ and $c$ but not vertices $a$ and $d$.},
in each subtree incident to $t$, one vertex of $V(G)$ mapped to a leaf.
The vertices of $\mu(t)$ are thus in bijection with edges incident to $t$ in
$T$. We call \emph{interface vertex} a vertex of some graph $\mu(t)$ mapped
to an edge of $T$ not incident to a leaf. 
The strong splits of $G$ correspond exactly to the bipartitions $(A,B)$ of the
set of leaves of $T$ for which $A$ and $B$ are the respective sets of leaves in 
the two components of $T-e$, for some internal edge $e$ of $T$.
The important property of the split decomposition is the following.

\begin{lemma}[\cite{GioanP12}]\label{lem:adj-split-dec}
	Two vertices $u,v \in V(G)$ are adjacent in $G$ if and only if, for every
	internal node $t$ of $T$ on the path $P$ between the leaves mapped to $u$ and
	$v$, there is an edge in $\mu(t)$ between the vertices of $\mu(t)$ mapped
	to the edges of $P$.
\end{lemma}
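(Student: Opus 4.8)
The plan is to prove the statement by induction on the number of internal nodes of $T$, exploiting the recursive nature of the split decomposition. The key tool is the composition (or \emph{join}) operation that reconstructs $G$ across a single internal edge: if $e$ is an internal edge of $T$ joining internal nodes $a$ and $b$, then deleting $e$ splits the leaf set into the two sides $(A,B)$ of the corresponding strong split, and $G$ is recovered from the two graphs $G_A,G_B$ represented by the two subtrees — each with a marker leaf added in place of the half-edge of $e$ — by declaring $x\in A$ adjacent to $y\in B$ exactly when $x$ is adjacent to the marker $m_a$ in $G_A$ and $y$ is adjacent to the marker $m_b$ in $G_B$. This is precisely the defining property of a split together with the way the labels $\mu(t)$ are constructed, and it is the form in which I would invoke the theory of \cite{GioanP12}.

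For the base case, when $T$ has a single internal node $t$ we have $\mu(t)=G$, and for any two leaves $u,v$ the path $P$ passes only through $t$; the two edges of $P$ incident to $t$ are the leaf edges, whose images in $\mu(t)$ are $u$ and $v$ themselves. Hence the condition reduces to the existence of an edge between $u$ and $v$ in $\mu(t)=G$, which is exactly adjacency in $G$.

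For the inductive step I would pick an internal edge $e$, split as above into $(T_A,\mu_A)$ and $(T_B,\mu_B)$ with markers $m_a,m_b$, and apply the induction hypothesis to $G_A$ and $G_B$ (both having strictly fewer internal nodes). If $u,v$ lie on the same side, say both in $A$, then the join does not affect their adjacency, the path $P$ stays inside $T_A$, and the path condition in $T$ coincides with the one in $T_A$; the induction hypothesis for $G_A$ then gives the claim. If instead $u\in A$ and $v\in B$, then $P$ crosses $e$ and decomposes as a path $P_A$ from $u$ to the marker leaf $m_a$ inside $T_A$, followed by a path $P_B$ from $m_b$ to $v$ inside $T_B$. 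By the join, $u$ and $v$ are adjacent in $G$ if and only if $u$ is adjacent to $m_a$ in $G_A$ and $v$ is adjacent to $m_b$ in $G_B$, and by the induction hypothesis these two adjacencies are equivalent to the path conditions along $P_A$ and $P_B$ respectively.

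The one point that needs care — and the main obstacle — is to verify that the markers read off along $P$ at each internal node agree with those read off along $P_A$ or $P_B$. The delicate case is at the nodes $a$ and $b$ incident to $e$, where I must check that the internal edge $e$ of $T$ and the marker edge of $T_A$ (respectively $T_B$) map to the same vertex $m_a$ (respectively $m_b$) of $\mu(a)$ (respectively $\mu(b)$). Granting this consistency, the set of internal nodes on $P$ is the disjoint union of those on $P_A$ and on $P_B$, with matching incident markers at every node, so the global path condition is exactly the conjunction of the two sub-conditions, completing the induction.
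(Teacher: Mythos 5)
The paper does not actually prove this lemma; it is imported from \cite{GioanP12}, so there is no internal proof to compare against. Your inductive argument is correct and is essentially the standard way to link the two views of the split decomposition: the recursive one (iterated splits, recomposed by the join across each internal edge) and the ``accessibility'' one (the path condition in the statement). Note that in \cite{GioanP12} the path condition is taken as the \emph{definition} of the graph represented by a graph-labelled tree, and compatibility with the join operation is what gets proven; you run the same equivalence in the opposite direction, which is the natural thing to do given that the present paper introduces the decomposition by recursively decomposing along strong splits. The marker-consistency point you flag is indeed the crux, but it holds by construction: the vertex of $\mu(a)$ mapped to the internal edge $e$ is, by definition of the splitting step, exactly the vertex that gets mapped to the new marker-leaf edge of $T_A$, so the condition read along $P$ at node $a$ coincides with the one read along $P_A$. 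One refinement you should make explicit: state the induction hypothesis for arbitrary graph-labelled trees arising from decomposition along splits, not only for canonical split decompositions. Otherwise, applying the hypothesis to $(T_A,\mu_A)$ requires the extra fact that this tree is the canonical decomposition of $G_A$ --- which is true, since the reducedness conditions (no clique--clique internal edge, no centre-to-extremity star--star internal edge) are inherited by subtrees and the canonical tree is unique, but it is an additional theorem you would have to invoke. With the induction stated for general split trees, nothing in your argument changes and the canonical decomposition is covered as a special case.
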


The following known observation will be implicitly used to consider the
possible structure of split decompositions.

\begin{observation}[see `reduced' in \cite{GioanP12}]
	Internal nodes of a split decomposition have degree at least $3$.
\end{observation}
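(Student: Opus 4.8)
The plan is to bound the degree from below in two stages. Since the internal nodes of $T$ are exactly its non-leaves, and a tree has no non-leaf of degree $1$, every internal node already has degree at least $2$; so the whole content is to exclude degree exactly $2$. Suppose for contradiction that some internal node $t$ has degree $2$. Because the vertices of $\mu(t)$ are in bijection with the edges incident to $t$, the label $\mu(t)$ is a graph on exactly two vertices $a_1,a_2$, mapped to the two incident edges $e_1,e_2$. Deleting $t$ splits $T$ into two subtrees; write $A$ and $B$ for the leaf sets on the $e_1$- and $e_2$-sides, so that $V(G)=A\sqcup B$. I would show that in both cases ($a_1a_2$ an edge or a non-edge of $\mu(t)$) the node $t$ is redundant, so that removing it yields a strictly smaller graph-labelled tree still representing $G$, contradicting that the canonical split decomposition is the unique reduced such tree (a property of the construction from \cite{GioanP12}).

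If $a_1$ and $a_2$ are adjacent, I would \emph{smooth} $t$: delete it and join its two former neighbours by a single edge, leaving every other label unchanged. For a pair $u\in A$, $v\in B$ the path between their leaves passes through $t$, where the adjacency test reads ``$a_1a_2\in E(\mu(t))$''; since this edge is present it never blocks adjacency, and all other nodes on the path are untouched, so the test returns the same answer in the smoothed tree. Pairs contained in $A$ or in $B$ have unchanged paths. By the adjacency lemma the smoothed tree represents $G$, the desired contradiction.

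The delicate case is $a_1a_2$ a non-edge, i.e.\ no edge of $G$ runs between $A$ and $B$. Here I would \emph{merge} $t$ into a neighbour, say the node $t_1$ met through $e_1$, with $b_1\in\mu(t_1)$ the vertex mapped to $e_1$. The key point is that any path through $b_1$ continuing towards $B$ is blocked at $t$ by the missing edge, so the adjacencies of $b_1$ inside $\mu(t_1)$ only ever governed pairs separated by the empty split $(A,B)$, which are non-adjacent regardless. Hence I can delete $t$, reattach $e_2$ (the interface towards $B$) directly to $t_1$, and relabel the corresponding vertex of $\mu(t_1)$ to be isolated; the adjacency lemma then certifies that every pair is classified exactly as before, and $G$ is again represented by a smaller tree. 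The case where $e_1$ instead leads to a leaf is handled by the same merge, absorbing that leaf's vertex into the neighbouring label (isolated if non-adjacent to $a_2$, otherwise inheriting the interface neighbourhood), so it needs no separate treatment.

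The step I expect to be the main obstacle is precisely the non-adjacent case: one must verify that overwriting $b_1$'s adjacencies leaves \emph{adjacent} pairs undisturbed, which works exactly because every affected pair straddles the empty split $(A,B)$ and is therefore a non-edge. Two caveats round out the argument. First, turning a smaller representing tree into a contradiction relies on the canonical decomposition being the unique \emph{reduced} graph-labelled tree; the tempting shortcut of invoking the correspondence between strong splits and internal edges is circular here, since ``distinct internal edges witness distinct strong splits'' is essentially the statement being proved. Second, the very smallest graphs (at most two vertices), whose decomposition carries no genuine internal structure, should be excluded or handled by convention, as for them the notion of internal node degenerates.
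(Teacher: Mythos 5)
Your two rewriting operations are sound as computations: using the adjacency lemma, a degree-$2$ node whose two marker vertices are adjacent can be smoothed without changing the represented graph, and your bookkeeping for the non-adjacent case is also right (though that case is vacuous here: a non-edge label on a degree-$2$ node forces the represented graph to be disconnected, while the split decomposition in this paper, as in \cite{SplitDecIntro,GioanP12}, is only considered for connected graphs). The genuine gap is the step that converts the smaller tree into a contradiction. You anchor it on ``the canonical split decomposition is the unique reduced graph-labelled tree''. But that uniqueness theorem is only true in a framework where degree-$2$ internal nodes are already forbidden by definition, i.e.\ where every label is required to have at least three vertices. If that requirement is dropped from the definition of ``reduced'' --- and your proof must drop it, otherwise there is nothing left to prove --- uniqueness is simply false: take the one-node decomposition of a prime graph such as $C_5$ and subdivide one tree edge by a node labelled $K_2$. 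Every label is still prime or degenerate (a two-vertex clique), no clique-join or star-join is created since the new node's neighbours are a prime node and a leaf, and by your own smoothing argument the represented graph is unchanged; so there would be two ``reduced'' trees. The only escape is to stipulate that a two-vertex label is neither prime nor degenerate, which is just another way of writing the observation into the definitions. So the statement you invoke is either false or presupposes the conclusion --- exactly the circularity you (correctly) diagnosed in the strong-splits shortcut, committed by your own anchor.

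The paper states this as an observation without proof because it is immediate from the construction, and that is the argument you should give: the canonical decomposition is produced by recursively decomposing along strong splits, and one decomposition step replaces a graph by $G[A]$ plus a marker vertex and $G[B]$ plus a marker vertex, with $|A|,|B|\geq 2$. Hence every graph produced during the process, in particular every final label $\mu(t)$, has at least three vertices; since the vertices of $\mu(t)$ are in bijection with the tree edges incident to $t$, every internal node has degree at least $3$. Any attempt to avoid this construction-based argument (for instance via Cunningham's formulation of the decomposition as the one with the fewest factors among decompositions into prime and degenerate pieces) runs into the same issue, because every precise statement of that minimality again lives in a framework where each piece has at least three vertices for the very reason just given.
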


The fact that a distance-hereditary graph is totally decomposable by the split
decomposition can be expressed more explicitly as follows.

\begin{theorem}[\cite{GioanP12}]\label{thm:totally-decomposable}
	A connected graph $G$ is distance-hereditary if and only if its split decomposition
	$(T,\mu)$ has all its internal nodes $t$ labelled by graphs $\mu(t)$ that are stars or
	cliques.
\end{theorem}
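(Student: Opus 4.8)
The plan is to avoid analysing which graphs can occur as labels and instead to route everything through the characterisation of connected distance-hereditary graphs as those obtainable from $K_1$ by repeatedly adding a pendant vertex or a twin (equivalently, reducible to $K_1$ by the reverse operations). The engine of the proof will be a single local correspondence: adding a pendant vertex, a true twin, or a false twin to a graph whose split decomposition uses only stars and cliques again yields a graph whose split decomposition uses only stars and cliques, and, conversely, every such graph on at least two vertices has a pendant vertex or a twin whose removal preserves this property. Granting this, both implications follow by induction on $|V(G)|$, with $K_1$ and $K_2$ as immediate base cases since neither has an internal node.

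For the direction from distance-hereditary to the structural condition I would track how the decomposition changes under each construction step. Adding a true twin of $u$ attaches a new leaf to a clique node carrying $u$ (creating such a node if needed); adding a false twin or a pendant vertex attaches a new leaf to a star node, as an ordinary leaf-vertex of the star in the twin cases and so that $u$ occupies the centre in the pendant case. The created or enlarged node is manifestly a star or a clique, and I then check, using the adjacency lemma, that the intended twin/pendant relation is realised. The one point requiring care is restoring canonicity: after the modification one may have to merge two nodes joined by a redundant split, but merging two cliques across an edge produces a clique and merging two compatibly oriented stars produces a star, so the labels stay stars or cliques.

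For the converse I would first isolate a convenient leaf. Since internal nodes have degree at least $3$, the subtree of $T$ spanned by its internal nodes has a leaf $t$ that is adjacent in $T$ to at most one further internal node, hence to at least two leaves of $T$ (and if no internal node exists then $G=K_2$). Reading off the label of $t$ then yields the required vertex: two leaves at a clique node are true twins; two non-centre leaf-vertices at a star node are false twins; and if the centre of a star node is itself one of the incident leaves, then the other leaves at that node are pendant vertices. Deleting the chosen vertex $v$ removes the corresponding leaf of $T$ and, after suppressing a resulting degree-$2$ node if one appears, leaves a split decomposition that still uses only stars and cliques; by induction $G-v$ is distance-hereditary, and since $v$ is a twin or a pendant vertex, so is $G$.

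I expect the main obstacle to be exactly the bookkeeping around canonicity. Writing down the local moves for an arbitrary (non-canonical) split decomposition is easy, but to conclude something about the \emph{canonical} decomposition $(T,\mu)$ I must argue that the canonical decomposition of the modified graph is obtained from that of the original by precisely these moves together with merges of same-type nodes, so that having only star and clique labels is genuinely preserved rather than merely available in some other decomposition. I anticipate reducing this to the standard fact from the split-decomposition literature that a graph admitting any decomposition into degenerate (star or clique) pieces is totally decomposable, i.e.\ has no prime node in its canonical decomposition, which lets me pass freely between the constructed decomposition and the canonical one.
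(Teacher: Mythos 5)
The paper does not prove this theorem at all: it is imported as a known result from \cite{GioanP12}, so there is no internal proof to compare your attempt against; your proposal has to be judged against the literature. On that footing, your plan is essentially the classical argument (going back to Bouchet and to Gioan--Paul): induct along the Bandelt--Mulder construction sequence (pendant vertices and twins) in one direction, and extract a pendant vertex or twin at a peripheral internal node of $T$ in the other. Your case analysis at that node is complete and correct (clique node gives true twins; star node whose center points into the tree gives false twins; star node whose center is mapped to a leaf edge gives pendant vertices), and your merging claims check out against the composition rule of graph-labelled trees: joining two cliques across a tree edge yields a clique, and joining a star whose \emph{center} lies on the edge to a star whose \emph{extremity} lies on the edge yields a star.

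As you say yourself, the entire weight of the argument rests on transferring between ``some decomposition with star/clique labels'' and ``the canonical decomposition has star/clique labels''. This is genuinely a standard consequence of Cunningham's theory \cite{SplitDecIntro}: node-splitting a clique produces two adjacent cliques, node-splitting a star produces two compatibly oriented adjacent stars, and the canonical decomposition is reached from any star/clique-labelled tree by node-joins of exactly the two degeneracy-preserving kinds above; so there is no gap provided you state and cite that fact explicitly rather than gesture at it. Let me also point out a shortcut that eliminates the incremental canonicity bookkeeping in the forward direction entirely: each label $\mu(t)$ of the canonical decomposition is an induced subgraph of $G$, hence distance-hereditary and connected, and by Cunningham's theorem it is either degenerate or prime with respect to splits. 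A connected label on three vertices is $P_3$ or $K_3$, hence degenerate; and a connected distance-hereditary graph on at least four vertices always admits a split, since a twin pair, or a pendant vertex together with its neighbour, forms one side of a split. So no label can be split-prime, and the forward direction follows without ever tracking how the canonical tree evolves under vertex additions.
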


Some other hereditary classes related to distance-hereditary graphs can be
characterised by the structure of their split decomposition.
\emph{Cographs} are exactly $P_4$-free graphs.
An \emph{asteroidal triple} (AT) is a triple of vertices such that each pair of
vertices is connected by a path avoiding the closed neighbourhood of the third
vertex. A graph is \emph{AT-free} if it contains no asteroidal triple.

\begin{theorem}[\cite{GioanP12}]\label{thm:dec-cograph}
	A connected graph $G$ is a cograph if and only if its split decomposition $(T,\mu)$
	has all its internal nodes $t$ labelled by graphs $\mu(t)$ that are stars
	or cliques, and there is an edge of $T$ towards which all stars are
	pointing\footnote{A graph $\mu(t)$ that is a star points in the direction
	of the edge $e$ incident to $t$ to which the centre of the star is mapped.
	It points towards some node or edge of $T$ if the path leading to it from
	$t$ contains $e$.}.
\end{theorem}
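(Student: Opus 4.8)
The plan is to reduce the statement to a purely combinatorial condition on the star nodes, and then treat the two directions separately. First I would observe that cographs form a subclass of distance-hereditary graphs: every hole, as well as the house, the domino and the gem, contains an induced $P_4$, so a $P_4$-free graph is HHDG-free. Hence, by the theorem characterising distance-hereditary graphs, if $G$ is a cograph then all internal nodes of $(T,\mu)$ are stars or cliques, and conversely if all nodes are stars or cliques then $G$ is distance-hereditary. Consequently, in both directions I may assume that every $\mu(t)$ is a star or a clique, so that the task reduces to: \emph{the stars all point towards a common edge if and only if $G$ is $P_4$-free}. For a star node $t$ with centre edge $e_t$, let $D_t$ denote the set of edges $e$ of $T$ such that $t$ points towards $e$, i.e. the edges on the centre side of $e_t$ together with $e_t$ itself; this is a subtree of $T$. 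The stars point towards a common edge exactly when $\bigcap_t D_t \neq \emptyset$, and since the $D_t$ are subtrees, the Helly property for subtrees of a tree reduces this to a pairwise condition: the stars point towards a common edge if and only if there is no pair of stars $s,t$ with $D_s \cap D_t = \emptyset$, i.e. \emph{pointing away from each other}.

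For the forward direction I would argue by contraposition, showing that two stars pointing away from each other certify an induced $P_4$. Among all such divergent pairs, choose $s,t$ minimising the length of the path $P$ between them. At $s$ the centre $e_s$ points away from $t$, so $P$ leaves $s$ through a ray $f_s$, and since $\deg s \geq 3$ there is a further ray $g_s \neq f_s$; symmetrically at $t$ there are a path ray $f_t$ and another ray $g_t$. Since every internal edge is a split with non-empty frontier, I can pick a leaf $a$ behind $e_s$ that is adjacent across $e_s$, a leaf $b$ behind $g_s$ adjacent across $g_s$, and symmetrically $d$ behind $e_t$ and $c$ behind $g_t$. Applying the adjacency Lemma at $s$ and $t$ one checks that $ab$, $cd$ and $ad$ are edges while $ac$, $bc$ and $bd$ are non-edges, so $b,a,d,c$ induce a $P_4$. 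The only place this can fail is the long path realising $ad$, which runs through the interior nodes of $P$: I need each such node to be a clique or a star whose centre lies on $P$. This is exactly what minimality of $P$ guarantees, since an interior star whose centre points off $P$ would, together with $s$, form a strictly shorter divergent pair. I expect this minimality step — turning an arbitrary divergent pair into one with a clean connecting path — to be the main obstacle of the whole proof.

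For the backward direction I would assume the stars point towards a common edge, pick an edge $e_0 \in \bigcap_t D_t$, and root $T$ at $e_0$, so that every star has its centre pointing towards its parent. I would then prove by induction on $T$ that the subgraph induced by the leaves below any edge is a cograph, and that $G$ is assembled by joins and disjoint unions: a clique node joins the graphs of its subtrees, while a star node whose centre points up forms the disjoint union of its ray-subtrees, which is in turn joined to the rest through the centre direction. The delicate point here is the bookkeeping of the interfaces: one must maintain a precise invariant describing, for each subtree, the set of its vertices seen as adjacent from outside (whether the relevant marker behaves as ``full'' or ``empty''), so that joins and unions compose correctly up the tree. Equivalently, this direction could be handled by contraposition, locating in any induced $P_4 = w\,x\,y\,z$ the two star nodes responsible, via the adjacency Lemma, for the non-edges $wy$ and $xz$, and checking from the tree geometry that their centres point away from each other — mirroring the construction of the forward direction. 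I would present the rooted-induction version as the cleaner argument and keep the contrapositive as a sanity check.
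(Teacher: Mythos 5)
First, a point of comparison: the paper does not prove this statement at all — it is imported as a known theorem from \cite{GioanP12} — so your argument cannot be checked against an in-paper proof and must stand on its own. Its architecture is sound: reducing, via the distance-hereditary characterisation, to the claim that the stars point towards a common edge if and only if $G$ is $P_4$-free is legitimate, and your forward direction is essentially correct. Choosing a divergent pair $s,t$ with a shortest connecting path $P$, picking realizing leaves $a,b,c,d$ behind $e_s$, $g_s$, $g_t$, $e_t$, and checking via the adjacency lemma that $b\,a\,d\,c$ is an induced $P_4$ all go through; in particular, minimality really does force every interior node of $P$ to be a clique or a star whose centre lies on $P$, since an interior star pointing off $P$ would form a strictly shorter divergent pair with $s$.

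There are, however, two gaps. (1) The Helly step is not justified as stated. The Helly property for subtrees of a tree is a statement about vertex sets, whereas your $D_t$ are edge sets, and pairwise edge-intersecting subtrees need not share an edge: in $K_{1,3}$ with edges $a,b,c$, the subtrees spanned by $\{a,b\}$, $\{b,c\}$ and $\{a,c\}$ intersect pairwise but have empty common intersection. Your conclusion survives only because the $D_t$ are not arbitrary subtrees but closed half-trees (an edge together with everything beyond it), and for these the claim is true but needs an argument — for instance: subdivide every edge of $T$; pairwise edge-intersection gives pairwise vertex-intersection of the subdivided half-trees; vertex-Helly yields a common vertex $w$; if $w$ is a midpoint you have a common edge, and if $w$ is an original node lying strictly beyond every centre $e_t$, then every edge incident to $w$ belongs to every $D_t$, so again a common edge exists. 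Some such argument must be supplied. (2) The backward direction is only a plan: you flag the ``bookkeeping of the interfaces'' as the delicate point and leave it unresolved. In fact it dissolves completely, and you should say so and prove it: root $T$ at the common edge, so that every star's centre is its parent edge; then a path climbing from any leaf always enters a node through a ray (or clique) marker and leaves through the centre (or clique) marker, and such pairs are always adjacent in the label graph. Hence \emph{every} leaf below a node is accessible from above — the invariant you were worried about is always ``full''. Consequently a clique node yields a genuine join of its children's graphs, a star node a genuine disjoint union, and $G$ itself is the join across the root edge, closing the induction. Without this accessibility claim the induction as sketched does not compose, since in general the graph at a clique node is not the plain join of its children's graphs.
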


\begin{theorem}[\cite{twwOne}]
	A connected graph $G$ is distance-hereditary and AT-free
	if and only if its split decomposition $(T,\mu)$ has all its internal nodes
	$t$ labelled by graphs $\mu(t)$ that are stars or cliques and there is a
	path $P$ of $T$ towards which all stars that label nodes $t \in V(T)\setminus
	P$ are pointing.
\end{theorem}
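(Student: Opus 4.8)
The plan is to treat the distance-hereditary part as given and to reduce the whole statement to an analysis of the directions in which the star labels point. By the star/clique characterisation of distance-hereditary graphs, a connected graph $G$ is distance-hereditary precisely when every $\mu(t)$ is a star or a clique, so the content of the theorem is that, for such a decomposition, AT-freeness is equivalent to the existence of the path $P$. I would work with a local reformulation of the pointing condition. For a node $m$ of $T$ and an incident edge $e=(m,v)$, call the direction $e$ \emph{divergent at $m$} if the component of $T-m$ on the $v$-side contains a star node whose centre does not point towards $m$, i.e.\ some star ``below'' $e$ points away from $m$. My guiding claim is that the following are equivalent: (i) there is a path $P$ with every off-$P$ star pointing towards $P$; (ii) no node of $T$ has three pairwise divergent directions; (iii) $G$ is AT-free. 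The equivalence (i)$\Leftrightarrow$(ii) is pure tree combinatorics, while the implications linking them to (iii) carry the graph-theoretic content. This also recovers \cref{thm:dec-cograph} as the special case where $P$ degenerates to a single edge.

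For (i)$\Leftrightarrow$(ii): if some node $m$ has three divergent directions, then any path meets at most two of the edges at $m$ (or misses $m$ entirely), so one of the three subtrees is off $P$ yet still hides a star pointing away from $m$, hence away from $P$; this rules out every candidate path, giving (i)$\Rightarrow$(ii). Conversely I would use that divergence is monotone along $T$ (if $e=(m,v)$ is divergent at $m$ and $m$ separates $m'$ from $v$, then the direction from $m'$ towards $v$ is divergent too), show that when no node has three divergent directions the edges that are divergent on both sides lie on a single path of $T$, take $P$ to be that path, and check directly that every off-$P$ star then points towards it.

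For (i)$\Rightarrow$(iii) I would argue by separation: given three leaves, let $p_a,p_b,p_c$ be the nodes where their paths to $P$ first meet $P$, ordered along $P$, and let $b$ be the leaf whose meeting point lies between the other two. Using the adjacency lemma together with the fact that off-$P$ stars point to $P$, one shows that every $a$–$c$ path in $G$ must traverse the region of $P$ around $p_b$ and is therefore forced through $N[b]$; hence $N[b]$ separates $a$ and $c$ and the triple is not asteroidal. For the remaining implication (iii)$\Rightarrow$(ii) I take the contrapositive: from a node $m$ with three divergent directions $e_1,e_2,e_3$ I pick in each a star $s_i$ pointing away from $m$ and a leaf $a_i$ lying beyond a non-centre edge of $s_i$ other than the one towards $m$. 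The adjacency lemma then shows the $a_i$ are pairwise non-adjacent (each path $a_i$–$a_j$ crosses $s_i$ through two non-centre edges) and that $N[a_i]$ is confined to the $e_i$-side of $s_i$; routing each $a_i$–$a_j$ connection through $m$ yields a path avoiding $N[a_k]$, so $\{a_1,a_2,a_3\}$ is an asteroidal triple. Together these give the cycle (i)$\Rightarrow$(iii)$\Rightarrow$(ii)$\Rightarrow$(i).

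The main obstacle is this last step, establishing (iii)$\Rightarrow$(ii): one must choose the three leaves so that all three required avoiding-paths genuinely exist as paths of $G$, which calls for a careful, case-based application of the adjacency lemma depending on whether $m$ and the $s_i$ are stars or cliques and on the exact position of each $a_i$ relative to the relevant centres. The matching separation claim in (i)$\Rightarrow$(iii), and the degenerate situations—$P$ a single edge, two of the three leaves attaching at the same node of $P$, or a median adjacent to a leaf—will need separate but routine attention.
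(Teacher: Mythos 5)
The paper never proves this theorem: it is imported from \cite{twwOne} as background, so there is no internal proof to compare against and your argument must stand on its own. Your architecture (i) $\Leftrightarrow$ (ii) $\Leftrightarrow$ (iii) is sensible. The leg (i) $\Leftrightarrow$ (ii) is indeed pure tree combinatorics and can be completed, with one caveat: the set of doubly divergent edges can be empty while star nodes exist (e.g.\ two stars pointing at each other, the canonical decomposition of a complete bipartite graph, or any cograph), so ``take $P$ to be that path'' needs a fallback, say a Helly argument on the subtrees $\{s\}\cup C(s)$ where $C(s)$ is the component of $T-s$ entered by the centre edge of $s$, which produces a one-node path. The separation argument for (i) $\Rightarrow$ (iii) is also completable along the lines you sketch, using that under (i) every leaf has an all-adjacent tree-path to its attachment node on $P$.

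The genuine gap is (iii) $\Rightarrow$ (ii), and it is not merely the ``careful case-based application'' you defer: the claim ``routing each $a_i$--$a_j$ connection through $m$ yields a path avoiding $N[a_k]$'' is false, and the implication you are proving is false for non-canonical star/clique decompositions, so no argument that (like yours) never invokes canonicity can succeed. Concretely, let $G$ be the caterpillar with spine $y_1x_1x_3x_2y_2$ and pendant $y_3$ at $x_3$; it is a tree, hence distance-hereditary, and AT-free, since every path between two of the $y_i$ passes through $x_3\in N[y_3]$. Represent $G$ by the graph-labelled tree with a central star node $m$ of degree $3$ whose centre points towards a node $s_3$ (leaves $x_3,y_3$, centre at $x_3$), and two further star nodes $s_1,s_2$ (leaves $x_i,y_i$, centre at $x_i$). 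Then $m$ has three divergent directions, your recipe selects $a_i=y_i$, and your claims (a) and (b) do hold --- yet $\{y_1,y_2,y_3\}$ is not an AT. The reason is that $\mu(m)$ is a star whose centre points into the third subtree: the markers towards $s_1$ and $s_2$ are non-adjacent leaves of $\mu(m)$, so $G$ has no edge at all between the $s_1$-side and the $s_2$-side, every $y_1$--$y_2$ path is forced through an $m$-accessible leaf of the $s_3$-side, and here the only such leaf is $x_3\in N[y_3]$. Of course this decomposition is not canonical (the edge $ms_3$ joins the centre of $\mu(m)$ to a non-centre vertex of $\mu(s_3)$, a reducible configuration), and that is exactly the point: a correct proof must use reducedness of the canonical decomposition (no clique--clique edge, no star--star edge joining a centre to a non-centre) precisely to handle the case where $\mu(m)$ is a star pointing into one of the three divergent subtrees $T_k$. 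In that case one cannot avoid $T_k$ at all, so your routing strategy must change: reducedness forces the neighbour of $m$ inside $T_k$ to be a clique or a star pointing at $m$, and a short argument then yields an $m$-accessible leaf of $T_k$ outside $N[a_k]$ through which the blocked connection can be routed. Without identifying this case and the role of canonicity, the plan as stated proves a false statement.
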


We now briefly introduce \emph{modules}.
A \emph{module} in a graph
$G=(V,E)$ is a subset $M$ of $V$ such that for every vertex $v \in V \setminus
M,$ $N(v) \cap M \in \{M,\varnothing\}$. A module is trivial if it consists of
the empty set, a
single vertex or the entire vertex set $V$. In particular, connected components
and pairs of twins are modules. A graph is \emph{prime}\footnote{While this word 
can also be used for graphs that do not admit splits, we will restrict its use to
graphs that admit only trivial modules.} (with respect to modules) if
it has only trivial modules. A module $M$ is \emph{strong} if it does not
overlap other modules: for every other module $M'$ we have $M \cap M' =
\varnothing$, $M \subseteq M'$ or $M' \subseteq M$. The \emph{modular
decomposition} of a graph $G$ is a \textbf{rooted} decomposition tree
representing the family of strong modules of $G$.
Vertices of $G$ are mapped to the leaves, and the sets of leaves of (rooted)
subtrees are exactly the strong modules. The internal nodes of this tree are
labelled by a (quotient) graph which encodes the adjacency between the strong
modules corresponding to its incident subtrees. It is well-known that cographs
are exactly graphs whose modular decomposition is labelled only by cliques and
independent sets (this corresponds exactly to the cotree). \cref{thm:dec-cograph}
simply expresses that it can be replaced by a split decomposition by replacing
independent sets with stars pointing to the root.

We will now consider graphs that are distance-hereditary and whose
complement is also distance-hereditary. It is direct from the forbidden induced
subgraphs of the class of distance-hereditary graphs to obtain the forbidden
induced subgraphs of its complementation closed subclass (see
\cref{fig:DHcoDH}). We denote this subclass by \DHcoDH{} (the class of graphs $G$
that are distance-hereditary and whose complement $\overline{G}$ is also
distance-hereditary).

\begin{figure}[h]
\centering
\includegraphics[scale=0.9]{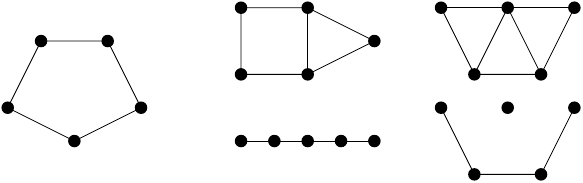}
\caption{The forbidden induced subgraphs of \DHcoDH{}. From left to right:
$C_5$, the house and its complement $P_5$, the gem and its complement $K_1 +
P_4$. Note that larger holes and the domino contain $P_5$ as an induced subgraph.
}
\label{fig:DHcoDH}
\end{figure}

\begin{figure}[h]
\centering
\includegraphics[scale=0.9]{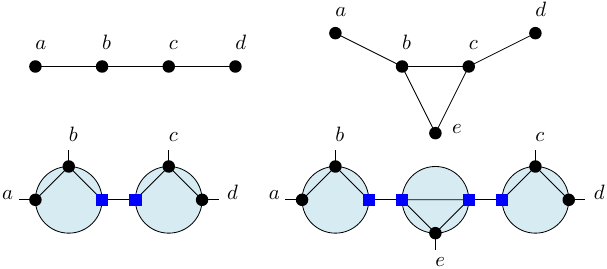}
\caption{From left to right: $P_4$ and the bull, the only two prime graphs in
\DHcoDH{} above their respective split decompositions. The graphs $\mu(t)$ are
depicted inside light blue circles, and the edges of $T$ are outside these circles.
Interface vertices are depicted by blue squares, original vertices of the
decomposed graph are depicted by black discs.
}
\label{fig:P4Bull}
\end{figure}

We obtain a characterisation of prime graphs of \DHcoDH{} and deduce a
characterisation of graphs of \DHcoDH{} by the structure of their split
decomposition or modular decomposition. The theorems will be proved after
providing further observations.

\begin{theorem}\label{thm:prime-DHcoDH}
	 The prime graphs in \DHcoDH{} are $P_4$ and the bull (see \cref{fig:P4Bull}).
\end{theorem}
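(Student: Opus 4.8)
The plan is to work with the canonical split decomposition $(T,\mu)$ of $G$ and let primeness collapse it to a tree with very few internal nodes, after which a finite check against the five obstructions of \cref{fig:DHcoDH} leaves only $P_4$ and the Bull. First I record the easy reductions. Since $G$ is prime it has no nontrivial module; in particular it has no pair of twins, and both $G$ and $\overline G$ are connected (a connected component, or the complement of one, would be a nontrivial module), so $|V(G)|\ge 4$. Because $G$ is distance-hereditary, every internal node of $(T,\mu)$ is a star or a clique. As a useful anchor I note that, being distance-hereditary with no twins, $G$ contains a pendant vertex, and dually (applying the same fact to $\overline G$, which is prime and distance-hereditary) $G$ contains a vertex adjacent to all but one other vertex; both extremes will reappear as the leaves hanging at the ends of $T$.

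Next I convert primeness into local constraints on $(T,\mu)$ via the adjacency lemma. If two leaves are attached to a common clique node then they are true twins, and if two leaves are attached to non-central edges of a common star node then they are false twins; either case contradicts primeness. Hence a clique node carries at most one leaf and a star node carries at most one non-central leaf. An internal node adjacent to exactly one other internal node has degree at least $3$ with exactly one internal edge, hence at least two leaves; the constraints then force it to be a star of degree exactly $3$ whose centre is a leaf $c$ and whose two non-central edges are one leaf $a$ and the single internal edge. Such a node contributes exactly a pendant vertex $a$ attached to $c$. Running the adjacency lemma through the two forced end-stars already settles the case of exactly two internal nodes: the graph is $a_1 - c_1 - c_2 - a_2$, i.e. $P_4$.

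The crux, and the step I expect to be the main obstacle, is to show that $T$ has at most three internal nodes. Prime distance-hereditary graphs alone do not satisfy this (every path $P_k$ is prime and distance-hereditary, with an arbitrarily long split tree), so here the hypothesis that $\overline G$ is distance-hereditary must do the work, through the obstructions $P_5$, the House, $C_5$, the Gem and $K_1+P_4$. The idea is that the two forced end-pendants, joined through the interior of $T$, realise long induced paths or, across a branching, an induced $K_1+P_4$: for instance three end-stars attached to a central clique node produce the net, which contains an induced $K_1+P_4$, while a long run of star nodes pointing along $T$ realises an induced $P_5$. Turning this intuition into a proof requires examining, along a longest internal path of $T$ and around any internal node of degree at least $4$, every combination of star/clique labels and of centre positions, and exhibiting in each an induced copy of one of the five forbidden graphs; this finite but somewhat delicate case analysis is where the real content lies.

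Once $T$ is known to have at most three internal nodes, each of bounded degree by the leaf constraints above, only finitely many labelled trees remain, and a tree on three nodes is a path $t_1 - t_2 - t_3$ whose ends are the forced degree-$3$ end-stars. Reading off the graph with the adjacency lemma, the two-node case gives $P_4$, and in the three-node case the interior node $t_2$ is forced (by the at-most-one-leaf constraints together with the obstructions) to be a clique carrying a single leaf flanked by the two end-stars, which yields precisely the Bull; every other labelling of $t_2$, or any interior node carrying a second leaf, produces one of $C_5$, House, $P_5$, Gem or $K_1+P_4$. Therefore $G$ is $P_4$ or the Bull.
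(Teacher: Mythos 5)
You set the frame correctly, and your local constraints are exactly the ones the paper derives: in a prime graph no clique node of $(T,\mu)$ can carry two leaves, no star node can carry two non-central leaves, hence every internal node adjacent to only one other internal node is a degree-$3$ star whose center is a leaf, producing a pendant vertex attached to a cutvertex; the two-internal-node case then reads off as $P_4$. The problem is that the step you yourself identify as the crux --- that $T$ has at most three internal nodes --- is never proved. You exhibit two sample configurations (three end-stars on a central clique giving the net, which contains $K_1+P_4$; a run of stars pointing along the path giving $P_5$) and then assert that a ``finite but somewhat delicate case analysis'' covers the rest. As framed, it is not even clear the analysis is finite: the internal tree can be arbitrarily large, middle nodes can be cliques or stars whose centers point in arbitrary directions, and branching nodes can be stars as well as cliques, so what is needed is an argument extracting a bounded forbidden configuration from an \emph{arbitrary} such tree. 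That argument is the actual content of the theorem, and deferring it leaves the proposal incomplete. (A smaller slip: in your three-internal-node endgame, a middle star whose center lies on an internal edge does not yield one of the five obstructions of \cref{fig:DHcoDH} --- the resulting graph is the chair, which is in \DHcoDH{}; it is excluded because it has false twins, i.e.\ by primeness and canonicity, not by the obstructions.)

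For comparison, the paper closes exactly this gap with two short lemmas rather than a case sweep. First, \cref{lem:cutvertices}: three cutvertices in one component force an induced $P_5$ or $K_1+P_4$ (the two tree configurations of three star nodes); since each ``end'' internal node yields a cutvertex, the internal nodes of $T$ must lie on a path with exactly two such ends. Second, \cref{lem:subcographs}: for every vertex $v$, $G-N[v]$ is a cograph (because the Gem and co-Gem are forbidden); applying this to the pendant vertex at one end-star and invoking \cref{thm:dec-cograph}, every star node other than the two end nodes must point towards the leaf of the opposite cutvertex, and doing this from both ends forces any middle star to point in two directions at once --- impossible. Hence all middle nodes are cliques, canonicity forbids two adjacent clique nodes, so at most one middle node remains, and primeness makes it a triangle, yielding $P_4$ or the Bull. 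If you want to rescue your outline, you should prove statements playing the roles of these two lemmas; the obstruction-hunting you postponed is then concentrated in the proof of \cref{lem:cutvertices}, where it genuinely is a finite check.
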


\begin{theorem}\label{thm:dec-DHcoDH}
	A connected graph $G$ is in \DHcoDH{} if and only if the following
	conditions are satisfied. Its split decomposition $(T,\mu)$ has all its
	internal nodes $t$ labelled by graphs $\mu(t)$ that are stars or
	cliques. There exists either an edge $e,$ or two nodes $s,s',$ labelled by
	stars, each not pointing towards the other, which are either adjacent in $T$ or
	have exactly one node $s''$ labelled by a clique on the path between them.
	All nodes of $T$ labelled by stars except $s,s'$ point towards $e$ or the
	nodes $s,s',s''$.
\end{theorem}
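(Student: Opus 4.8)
The plan is to reduce the statement to a single structural feature of the star nodes of $(T,\mu)$ and to read off forbidden induced subgraphs from that feature through the adjacency lemma stated above. First I would dispose of the first condition: every graph in \DHcoDH{} is in particular distance-hereditary, so the characterisation of distance-hereditary graphs already forces all internal nodes of $(T,\mu)$ to be labelled by stars or cliques, and only the \emph{arrangement} of the star nodes remains to be understood. I would also simplify the complement condition. Among the forbidden subgraphs of \cref{fig:DHcoDH}, the graphs $C_5$, the House and the Gem are already forbidden for distance-hereditary graphs; hence for a distance-hereditary $G$ one has $\overline{G}$ distance-hereditary if and only if $G$ has no induced $P_5$ and no induced $K_1 \cup P_4$ (the co-Gem). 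Thus the entire content of the theorem is to translate ``no induced $P_5$ and no induced co-Gem'' into the claimed shape of the tree.

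The engine is an extraction lemma. I would orient each star node toward the edge its centre is mapped to, and call a pair of star nodes \emph{conflicting} if neither points toward the other along the path of $T$ joining them. Using the adjacency lemma, together with the fact that a clique node is transparent for adjacencies (any path crossing it contributes an edge) while contributing leaves adjacent to both sides, I would record the following readings: two conflicting stars that are adjacent realise an induced $P_4$, and two conflicting stars separated by exactly one clique node realise the Bull, reproducing precisely the decompositions of \cref{fig:P4Bull} and matching \cref{thm:prime-DHcoDH}. By contrast, three stars that are pairwise conflicting along a path realise an induced $P_5$, while two conflicting pairs lying in distinct branches of a common clique node realise an induced co-Gem; more generally, any second conflict, or a single conflicting pair separated by more than one internal node, produces one of $P_5$ or co-Gem.

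Both directions then follow by applying this lemma. For the forward direction I would assume $G \in \DHcoDH{}$ and run the lemma contrapositively: since $G$ has no induced $P_5$ and no induced co-Gem, there is at most one conflicting pair of stars, and if one exists its two stars are adjacent or separated by a single clique node. If no conflicting pair exists, all star nodes agree on a common direction, which is exactly the hypothesis of \cref{thm:dec-cograph}; then $G$ is a cograph and all stars point toward a common edge $e$, the first alternative. If a unique conflicting pair $\{s,s'\}$ exists, it is the core: $s$ and $s'$ are the two distinguished stars (adjacent, or with a single clique node $s''$ between them) and the absence of a further conflict forces every other star to point toward this core, the second alternative. For the converse I would assume the stated shape; the star/clique labelling makes $G$ distance-hereditary, so it suffices to exclude $P_5$ and co-Gem. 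In the edge case \cref{thm:dec-cograph} shows $G$ is a $P_4$-free cograph, so none of the forbidden subgraphs (each of which contains an induced $P_4$) occurs; in the two-star case, any induced $P_5$ or co-Gem would, reading the lemma in reverse, require either a second conflicting pair or a non-localised conflicting pair, both excluded by hypothesis.

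The main obstacle is the extraction lemma itself, and specifically the book-keeping of clique nodes: they never block adjacencies, yet each contributes leaves that are common neighbours of the two sides, which is exactly what upgrades a $P_4$ (two adjacent conflicting stars) to the Bull (one clique between them) and, with one further clique or star, to $P_5$ or co-Gem. Getting the thresholds exactly right, so that ``adjacent or one clique'' is permitted while ``anything larger, or a second conflict'' is forbidden, demands a complete case analysis over the position and orientation of the intermediate nodes and over the two ways a second conflict can arise: chained through a star (yielding $P_5$) or joined through a clique (yielding co-Gem). This is also where the self-complementary nature of the class surfaces, the star/clique duality matching the $P_5$/co-Gem duality under complementation.
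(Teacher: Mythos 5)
Your route is genuinely different from the paper's: the paper gets \cref{thm:dec-DHcoDH} by combining \cref{thm:prime-DHcoDH}, \cref{lem:twin-free-connected-DH} and \cref{thm:dec-cograph}, i.e.\ it eliminates twins down to a prime core ($P_4$, the Bull, or the cograph case) and reads the claimed tree shape off the way twin additions reshape the canonical decomposition. You instead work directly on $(T,\mu)$ through the ``conflict'' relation between star nodes and extract forbidden subgraphs. This plan can be made to work, but as written it has a genuine gap.

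The gap is in the extraction lemma itself, which you assert but do not prove, and which is \emph{false} in the generality you state it: it does not follow from the adjacency lemma plus ``clique transparency'' alone, but needs the reduction properties that make the split decomposition canonical, namely that no two clique nodes are adjacent and that no star node has its centre mapped to an edge whose other end is an extremity (non-centre vertex) of another star. Concretely, consider the tree $G$ with edges $ab, ad, dc, de$ (a path $b$--$a$--$d$ with two pendant vertices $c,e$ at $d$); this graph is in \DHcoDH{} and contains neither $P_5$ nor the co-Gem. It admits a split decomposition into three stars on a path: $s_1$ with leaves $a,b$ and centre towards $a$; $s_2$ with leaf $c$ and centre towards $s_3$; $s_3$ with leaves $d,e$ and centre towards $d$. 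Here $(s_1,s_2)$ and $(s_1,s_3)$ are two conflicting pairs, and the second one is ``non-localised'' (a star sits between them), contradicting your claim that any second conflict, or any non-localised conflict, produces $P_5$ or a co-Gem. Of course this decomposition is not canonical: the centre of $s_2$ faces an extremity of $s_3$, so these two nodes merge. But that is precisely the point: in your case analysis, the configuration ``conflicts $(s_1,s_2)$ and $(s_1,s_3)$ with $s_2$ pointing towards $s_3$'' cannot be converted into a $P_5$ or a co-Gem; it must be killed by canonicity when $s_2,s_3$ are adjacent (and routed to the clique-in-between/co-Gem case otherwise). Your proposal never invokes these reduction rules, so the ``complete case analysis'' you defer to cannot close as described.

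A second, smaller gap is the converse direction. ``Reading the lemma in reverse'' is not an argument: the lemma says that bad configurations of $T$ yield induced $P_5$ or co-Gems, not that every induced $P_5$ or co-Gem in $G$ arises from a bad configuration. To finish you would need a separate argument locating the five leaves of a hypothetical induced $P_5$ or co-Gem in $T$ and analysing the labels along their Steiner tree, or you could switch to the paper's mechanism for this direction: the stated shape allows peeling off leaves of $T$ by twin eliminations down to an induced subgraph of the Bull, and \DHcoDH{} is closed under adding twins, so $G$ is in the class.
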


We provide some explanation of the different cases in the statement above. The
case of the edge $e$ is exactly the case of a cograph (\cref{thm:dec-cograph}),
while the two other cases correspond to containing either $P_4$ or the bull as
an induced subgraph, see \cref{fig:P4Bull} for their split decompositions.

\begin{lemma}\label{lem:subcographs}
If $G$ is in \DHcoDH{}, for every vertex $v$, $G[N(v)]$ and $G-N[v]$ are cographs.
\end{lemma}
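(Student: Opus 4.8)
The plan is to prove the contrapositive-style statement by showing that if $G[N(v)]$ or $G-N[v]$ contains one of the cograph obstructions $P_4$, then $G$ itself contains one of the forbidden induced subgraphs of \DHcoDH{} (namely $C_5$, the House, $P_5$, the Gem, or $K_1+P_4$), contradicting $G \in \DHcoDH{}$. Since cographs are exactly the $P_4$-free graphs, it suffices to rule out an induced $P_4$ in each of $G[N(v)]$ and $G-N[v]$. I would handle the two cases separately but symmetrically, exploiting the fact that $\overline{G}$ is also distance-hereditary: note that $G-N[v]$ is the set of non-neighbours of $v$, which in the complement $\overline G$ is exactly $\overline{G}[N_{\overline G}(v)]$ together with possibly $v$'s relationship flipped, so the second claim for $G$ should follow from the first claim applied to $\overline{G}$ at the same vertex $v$.

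First I would treat $G[N(v)]$. Suppose for contradiction that four neighbours $a,b,c,d$ of $v$ induce a $P_4$, say with edges $ab,bc,cd$ and non-edges $ac,ad,bd$. Now I adjoin $v$, which is adjacent to all four of $a,b,c,d$. The resulting induced subgraph on $\{v,a,b,c,d\}$ is a $P_4$ plus a dominating vertex, which is precisely the Gem. Since the Gem is a forbidden induced subgraph of distance-hereditary graphs (and of \DHcoDH{}), this contradicts $G \in \DHcoDH{}$. Hence $G[N(v)]$ has no induced $P_4$ and is a cograph.

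For $G-N[v]$, the cleanest route is via complementation. Observe that the non-neighbours of $v$ in $G$ are exactly the neighbours of $v$ in $\overline{G}$, and that for two vertices $x,y \in V \setminus N[v]$ we have $xy \in E(G)$ iff $xy \notin E(\overline G)$; thus $G-N[v] = \overline{\,\overline{G}[N_{\overline G}(v)]\,}$. Since $\overline G \in \DHcoDH{}$ (the class is closed under complementation by definition), the first part applied to $\overline G$ and $v$ shows that $\overline{G}[N_{\overline G}(v)]$ is a cograph; as cographs are closed under complementation, $G-N[v]$ is also a cograph. Alternatively, one can argue directly: a $P_4$ on $\{a,b,c,d\}$ in $G-N[v]$ together with $v$ (now non-adjacent to all four) induces $K_1 + P_4$, the complement of the Gem, which is likewise forbidden.

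I expect the only real subtlety to be bookkeeping in the case analysis — correctly identifying that "$P_4$ plus a universal vertex" is the Gem and "$P_4$ plus an isolated vertex" is $K_1+P_4$, so that the obstruction lands exactly on the forbidden list of \cref{fig:DHcoDH}. The complementation shortcut makes the $G-N[v]$ case essentially free once the $G[N(v)]$ case is settled, so the main (and quite light) obstacle is just verifying the adjacencies of the five-vertex configuration match a named forbidden graph; no combinatorial search or delicate estimate is needed.
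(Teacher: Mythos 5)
Your proof is correct and matches the paper's own argument: the paper likewise notes that $v$ cannot be the center of a Gem or co-Gem, so a $P_4$ in $G[N(v)]$ or in $G-N[v]$ would yield the Gem or $K_1+P_4$ respectively, both forbidden. Your complementation detour for the second case is a harmless alternative phrasing, but your "direct" argument is exactly what the paper does, just spelled out in more detail.
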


\begin{proof}
Since $v$ cannot be the centre of a gem or a co-gem, $G[N(v)]$ and $G-N[v]$ are
$P_4$-free, which is a well-known characterisation of cographs.
\end{proof}

A \emph{cutvertex} is a vertex whose deletion increases the number of
connected components. A \emph{biconnected component} is a maximal edge-induced
subgraph which cannot be disconnected by deleting a single vertex. By
definition, a cutvertex is always contained in at least two biconnected
components. More generally, the behaviour of cutvertices and biconnected
components is described by block-cut trees. If $C$ is a biconnected component
and $v$ is a cutvertex contained in $C$, then the edges of $C$ incident to $v$
form an edge-cut which corresponds to a strong split (unless this edge-cut is a
single edge incident to a pendant vertex). This can be expressed in terms of
the structure of the split decomposition as follows.

\begin{observation}
	A cutvertex is exactly a vertex mapped to a leaf $\ell$ of the split
	decomposition $(T,\mu)$ adjacent to an internal node $t$ labelled by a
	graph $\mu(t)$ which is a star whose centre is mapped to the edge of $T$
	incident to $\ell$. Furthermore, a graph $\mu(t)$ which is not a star must
	be biconnected.
\end{observation}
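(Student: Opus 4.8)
The plan is to read both directions off the adjacency lemma. Fix a leaf $\ell$ mapped to a vertex $v$ and let $t$ be its neighbour in $T$; since $G$ is distance-hereditary, $\mu(t)$ is a star or a clique. Write $x$ for the vertex of $\mu(t)$ mapped to the edge $(\ell,t)$, let $e_1,\dots,e_d$ be the remaining edges at $t$, let $y_i$ be the vertex of $\mu(t)$ mapped to $e_i$, and let $S_i$ be the set of leaves lying in the component of $T-e_i$ that avoids $t$. Because internal nodes have degree at least $3$ we have $d\ge 2$, each $S_i$ is nonempty, and the $S_i$ partition $V(G)\setminus\{v\}$. The computation I would record first is the following consequence of the adjacency lemma: if $u\in S_i$ and $u'\in S_j$ with $i\neq j$ are adjacent in $G$, then the path in $T$ from the leaf of $u$ to the leaf of $u'$ runs through $t$ using exactly the edges $e_i,e_j$, so $y_i$ and $y_j$ must be adjacent in $\mu(t)$. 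Contrapositively, whenever $y_i\not\sim y_j$ there is no edge of $G$ between $S_i$ and $S_j$.

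For the direction asserting that such a configuration forces a cutvertex, suppose $\mu(t)$ is a star whose centre is mapped to $(\ell,t)$, i.e.\ $x$ is the centre. Then the $y_i$ are pairwise non-adjacent, so by the no-cross-edge remark above $G-v$ has no edge between distinct parts $S_i$; as there are at least two nonempty parts, $G-v$ is disconnected and $v$ is a cutvertex. Note that in this case deleting $x$ from $\mu(t)$ leaves an independent set, whereas in every other case ($\mu(t)$ a clique, or a star with $x$ a leaf) the graph $\mu(t)-x$ is still connected; this dichotomy is what the two directions are really detecting.

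For the converse I would argue the contrapositive: if $\mu(t)$ is a clique, or a star whose centre is \emph{not} $x$, then $v$ is not a cutvertex. The clean way is local surgery. Deleting the leaf $\ell$ from $T$ amounts to deleting $x$ from $\mu(t)$ (smoothing $t$ if its degree drops to $2$), and since the adjacency lemma only ever inspects paths between the two leaves being tested, the resulting labelled tree still correctly encodes $G-v$. In both surviving cases $\mu(t)-x$ is connected (a clique minus a vertex is a clique, a star minus a leaf is a star), so the only modified label stays connected while all others are unchanged. It then remains to conclude that $G-v$ is connected, and this is the main obstacle, since it is not literally one of the quoted results: one must show that a graph whose split decomposition has all node-labels connected is itself connected. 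I would prove this directly from the adjacency lemma by using, at the node $t$, the representatives $y_1,\dots,y_d$ as a hub; each $y_i$ genuinely lies in $S_i$ and their mutual adjacencies reproduce $\mu(t)-x$, so when $\mu(t)-x$ is connected the hub is connected in $G-v$, and one checks that every vertex reaches the hub. The subtle point to handle with care is that an individual $S_i$ need not induce a connected subgraph, so the connecting paths may have to leave and re-enter $S_i$ through the hub. Combining the two directions yields the stated equivalence.
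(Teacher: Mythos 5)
The paper states this Observation without any proof at all, so the only question is whether your argument stands on its own. Your forward direction does: if $\mu(t)$ is a star centred at $x$, the adjacency lemma forbids every edge between distinct parts $S_i$, and with $d\ge 2$ nonempty parts $G-v$ is disconnected. The surgery reduction in the converse is also sound: for $u,w\neq v$ the path in $T$ between their leaves never uses the edge $(\ell,t)$, so deleting $x$ from $\mu(t)$ (and smoothing, which is harmless exactly because in the surviving cases the two remaining marker vertices are adjacent) yields a labelled tree encoding $G-v$. Restricting to star/clique labels is fine in the paper's distance-hereditary context.

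The gap is the step you defer with ``one checks that every vertex reaches the hub.'' That check is the entire content of the converse direction, and it does not follow from anything you have set up: a vertex $u\in S_i$ deep in its subtree is only known to be connected to the hub \emph{in $G$}, and such a path may pass through $v$, which is exactly what must be ruled out. The missing ingredient is the defining property of the split $(S_i,V\setminus S_i)$ (equivalently, the factorisation of the adjacency lemma at $t$). Concretely: (i) since $G$ is connected, every connected component $C$ of $G[S_i]$ contains a vertex $f$ with a neighbour outside $S_i$, and the path from $u$ to $f$ inside $C$ avoids $v$; (ii) since $\mu(t)-x$ is connected with at least two vertices, $y_i$ has a neighbour $y_j$ in $\mu(t)-x$; because $\mu(t)$ is an induced subgraph of $G$, the edge $y_iy_j$ is an edge of $G$, and the adjacency lemma applied to it shows that $y_j$ satisfies the adjacency conditions on its side of $t$ (it is a frontier vertex of $V\setminus S_i$); (iii) the split property (all edges between the two frontiers), or equivalently combining the conditions satisfied by $f$ on one side and $y_j$ on the other with $y_i\sim y_j$ at $t$, gives $f\sim y_j$ in $G-v$. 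Only then does every vertex reach the hub, which induces the connected graph $\mu(t)-x$, so $G-v$ is connected. Without an argument of this kind (or, alternatively, an induction over the tree showing that connected labels force a connected graph), your converse is a plan rather than a proof; with it, your approach is correct and complete.
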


\begin{lemma}\label{lem:cutvertices}
	If $G$ contains $3$ cutvertices in the same connected component, then $G$
	is not in \DHcoDH{}.
\end{lemma}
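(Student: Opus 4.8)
The plan is to show that three cutvertices always force one of the forbidden induced subgraphs of \cref{fig:DHcoDH}; concretely I will produce either an induced $P_5$ or an induced $K_1+P_4$ (the co-Gem). The basic tool is the defining separation property of a cutvertex $c$: if $a,b$ lie in different components of $G-c$, then every $a$--$b$ path, and in particular every geodesic, passes through $c$. Since geodesics are induced paths, chaining such separations along a single geodesic immediately yields long induced paths. To organise the three cutvertices $c_1,c_2,c_3$ I would use their positions in the block-cut tree of the component containing them, and let $m$ be the median node of $\{c_1,c_2,c_3\}$, i.e. the unique node lying on all three pairwise tree-paths. There are then two cases, according to whether $m$ is a cutvertex or a block.

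First, if $m$ is a cutvertex (this includes the case $m\in\{c_1,c_2,c_3\}$), then I obtain three cutvertices $x,y,z$ with $y$ separating $x$ from $z$: take $y=m$ and $x,z$ two of the $c_i$ in the former case, or $y$ the middle one in the latter. Because $x$ and $z$ are themselves cutvertices, I can pick a neighbour $a$ of $x$ in a component of $G-x$ not containing $y$, and symmetrically a neighbour $b$ of $z$ in a component of $G-z$ not containing $y$. The separation properties then force $x,y,z$ to occur, in this order, on any geodesic from $a$ to $b$, so such a geodesic visits the five distinct vertices $a,x,y,z,b$ in order and hence has length at least $4$. Being a geodesic it is induced, so $G$ contains an induced $P_5$.

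Second, if $m$ is a block $B^{*}$, then the three tree-paths attach to $B^{*}$ at three \emph{distinct} cutvertices $a_1,a_2,a_3$ of $G$ lying in $B^{*}$ (if two of them coincided, that common cutvertex would be the median, contradicting that $m$ is a block), and each $a_i$ has a private neighbour $p_i$ in the part of $G$ separated off by $a_i$, so that $p_i$ is non-adjacent to $B^{*}\setminus\{a_i\}$ and to the other $p_j$. Now I distinguish on adjacency inside $B^{*}$: if some pair, say $a_1,a_2$, is adjacent, then $\{p_1,a_1,a_2,p_2\}$ induces a $P_4$ while $p_3$ is non-adjacent to all four, giving an induced $K_1+P_4$; if instead the chosen pair is non-adjacent, a geodesic from $p_1$ to $p_2$ runs $p_1,a_1,\dots,a_2,p_2$ with at least one vertex of $B^{*}$ strictly between $a_1$ and $a_2$, hence has length at least $4$ and provides an induced $P_5$.

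I expect the block case to be the main obstacle: it is the reason the statement cannot be proved by exhibiting an induced $P_5$ alone. The net (a triangle with a pendant at each vertex) has exactly three cutvertices and contains no induced $P_5$, yet is excluded from \DHcoDH{} precisely by the co-Gem; recognising that the branching configuration produces $K_1+P_4$ rather than a long path is the crux. The remaining work is routine: checking that the auxiliary vertices $a,b,p_i$ exist and are pairwise distinct, and verifying that the stated separations really force the cutvertices to appear in the claimed order along each geodesic.
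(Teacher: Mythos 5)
Your proof is correct, and it takes a genuinely different route from the paper's. The paper works inside the split decomposition: it chooses three cutvertices with no further cutvertex on the paths between them, notes that each corresponds to a star node of $(T,\mu)$, and then splits into two configurations --- the three star nodes lie on a common path of $T$ (giving an induced $P_5$), or they meet at a branching node, which may be assumed to be a clique node (giving an induced $K_1+P_4$). Your median-of-the-block-cut-tree dichotomy (median is a cutvertex vs.\ a block) is the exact analogue of that path vs.\ branching-node case split, but you replace reading adjacencies off the labels $\mu(t)$ by elementary separation arguments along geodesics. What this buys you: the argument is self-contained and never uses distance-hereditariness, so it proves the stronger fact that \emph{every} graph with three cutvertices in one component contains an induced $P_5$ or an induced $K_1+P_4$; the paper's proof instead needs $G$ to be distance-hereditary so that its split decomposition is labelled by stars and cliques (harmless, since otherwise $G$ is trivially outside \DHcoDH{}, but still a hypothesis the argument leans on). What the paper's route buys: the configurations it isolates are precisely those that reappear in \cref{thm:prime-DHcoDH} and \cref{thm:dec-DHcoDH}, so the lemma's proof doubles as preparation for the main theorems, and it is shorter once the split-decomposition machinery is in place. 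Your remark about the net is exactly the right sanity check that the branching case genuinely requires the co-Gem and cannot be collapsed into the $P_5$ case; the verifications you defer (existence and distinctness of $a,b,p_i$, the forced order of $a,x,y,z,b$ along a geodesic, and the pairwise distinctness of the attachment cutvertices $a_1,a_2,a_3$ via the median property) are indeed routine and all go through.
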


\begin{proof}
	Consider $3$ cutvertices in a connected graph $G$. From the above
	observation, they correspond to three nodes $t_1,t_2,t_3$ in the split
	decomposition such that $\mu(t_1),\mu(t_2),\mu(t_3)$ are stars. Without
	loss of generality, we may ask that on paths of the split decomposition
	tree between $t_1,t_2,t_3,$ each node $t$ labelled by a star points to one
	of $t_1,t_2,t_3$. Indeed, otherwise we can consider the induced subgraph
	obtained from replacing the subtree pointed at by the star's centre by a
	single vertex (if necessary), this makes it a cutvertex. We can then
	replace one of $t_1,t_2,t_3$ which is not between the other two by $t$,
	this decreases the number of nodes on paths between $t_1,t_2,t_3,$
	guaranteeing that we reach the claimed case after a finite number of steps.
	There are two possible configurations for $t_1,t_2,t_3$:
	either they are on a common path of $T,$
	or there is a distinct branching node which separates the three nodes, see
	\cref{fig:cutvertices}.

	In the first case, we can consider an induced subgraph of $G$ consisting of
	the three cutvertices and a private neighbour for each cutvertex except the
	central one.
	Indeed, each path of the split decomposition between the cutvertices is
	labelled by cliques or stars pointing to either cutvertex. We can then
	conclude that these vertices induce a $P_5$ using \cref{lem:adj-split-dec},
	which contradicts membership in \DHcoDH{}, see \cref{fig:DHcoDH}.

	In the second case, we can assume the branching node is labelled by a
	clique since otherwise $G$ would contain an induced subgraph in the previous
	configuration. We can then conclude that $K_1+P_4$ (the co-gem) is an
	induced subgraph, which also contradicts membership in \DHcoDH{}, see \cref{fig:DHcoDH}.
\end{proof}

\begin{figure}[h]
\centering
\includegraphics[scale=0.9]{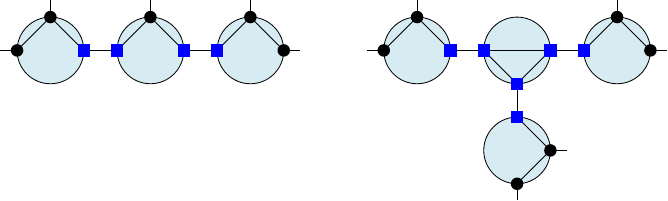}
\caption{An illustration of the two configurations in the proof of \cref{lem:cutvertices}.}
\label{fig:cutvertices}
\end{figure}

\begin{proof}[Proof of \cref{thm:prime-DHcoDH}]
	First, $P_4$ and the bull are isomorphic to their complement, thus it
	suffices to check that they are distance-hereditary. One can check that they
	do not contain any of the obstructions of \cref{fig:HHDG} as induced
	subgraphs. Moreover, both graphs are prime (this is well-known and easy to check,
	note that using \cref{lem:twin-free-connected-DH} makes it easier).

	Now, assume that $G$ is prime and in \DHcoDH{}. In particular, it is connected (by being
	prime), and admits a split decomposition $(T,\mu)$ labelled only by stars
	and cliques (by \cref{thm:totally-decomposable} from being connected and in DH).
	Since $G$ is prime, for every internal node $t$ of $T$ labelled by
	a graph $\mu(t)$ having a single interface vertex, $\mu(t)$ is a $P_3$ (the
	star on three vertices) whose centre is not the interface vertex. Otherwise,
	there would be twins in $G$, and thus $G$ would not be prime. In
	particular, the vertex introduced at the leaf incident to the edge of $T$
	mapped to the centre of the $P_3$ is a cutvertex.

	By \cref{lem:cutvertices}, $G$ contains at most $2$ cutvertices. We deduce
	that the split decomposition of $G$ has only two internal nodes having a
	single interface vertex. Note that if there is only a single node in the
	split decomposition, then $G$ is a cograph, which contradicts the fact that
	it is prime. Otherwise, there are at least two nodes having a single
	interface vertex. In particular, we can conclude that there must be exactly
	$2$ cutvertices in $G$. This implies that the internal nodes of $(T,\mu)$
	appear on a path. By \cref{lem:subcographs}, we deduce that the induced
	subgraph $H$ obtained by deleting one of the cutvertices is a cograph.
	Indeed, one connected component of $H$ is a single vertex $v$ (the pendant
	vertex adjacent to the deleted cutvertex) and the other is the graph $G-N[v]$.
	
	We may now apply \cref{thm:dec-cograph} to $G-N[v]$ to deduce the
	structure of the split decomposition $(T,\mu)$. Its split decomposition
	consists only of cliques and stars pointing towards the edge of $T$
	incident to the leaf mapped to the remaining cutvertex. By applying this
	reasoning to both cutvertices, we conclude that nodes labelled by stars other
	than the nodes corresponding to the two cutvertices must point in opposite
	directions simultaneously, a contradiction. We deduce that all internal
	nodes of the split decomposition are cliques except for the nodes
	corresponding to the cutvertices. 

	Now, if two nodes labelled by cliques are adjacent in a split
	decomposition, the edge between them does not correspond to a strong split.
	Hence, in the canonical split decomposition of $G$, there is at most one
	node labelled by a clique. If it exists, it has exactly two interface
	vertices. This implies that it has exactly one vertex which is not an
	interface vertex (otherwise there would be twins), so it is a triangle. We
	can conclude that the split decomposition is either that of a $P_4$ or that
	of a bull, see \cref{fig:P4Bull}.
\end{proof}

We now extend our result to connected graphs in \DHcoDH{} using the
following observation.

\begin{lemma}\label{lem:twin-free-connected-DH}
	If $G$ is a connected distance-hereditary graph, then it is twin-free if
	and only if it is prime.
\end{lemma}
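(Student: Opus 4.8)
The plan is to treat the two implications separately, handling both by contraposition, with essentially all of the content lying in the implication from twin-free to prime. For the direction prime $\Rightarrow$ twin-free, suppose $G$ has a pair of twins $u,v$. By the definition of twins every other vertex is adjacent to both or to neither of $u$ and $v$, so $\{u,v\}$ is a module of $G$; as soon as $G$ has at least three vertices this module is nontrivial, and hence $G$ is not prime. (For $|V(G)| \le 2$ the statement is degenerate, so I would simply assume $|V(G)| \ge 3$, which is the only regime relevant to the paper.)

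For the converse I would show that a connected distance-hereditary graph that is not prime has twins. Suppose $G$ is not prime and fix a nontrivial module $M$, so that $2 \le |M| \le |V(G)| - 1$. The key step is to prove that $G[M]$ is a cograph. Since $G$ is connected and $M \ne V(G)$, some vertex $y \notin M$ has a neighbour in $M$, and because $M$ is a module $y$ must then be adjacent to \emph{all} of $M$. Now if $G[M]$ contained an induced $P_4$, then $y$ together with the four vertices of this $P_4$ would induce a gem; this is impossible since $G$ is distance-hereditary and the gem is a forbidden induced subgraph (\cref{fig:HHDG}). Hence $G[M]$ is $P_4$-free, i.e. a cograph.

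It then remains to observe that every cograph on at least two vertices has a pair of twins: taking a deepest internal node of its cotree, all of its children are leaves, and any two of them are twins in $G[M]$ (adjacent twins at a join node, non-adjacent twins at a union node). Since $M$ is a module of $G$, any two twins of $G[M]$ are also twins of $G$, which yields the desired pair and finishes the contrapositive.

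The main obstacle is the middle step: translating the purely structural hypothesis that $M$ is a module into a forbidden-subgraph contradiction. The crucial idea is to use connectivity to produce a vertex adjacent to all of $M$, and then to recognise that such a vertex promotes any induced $P_4$ inside $M$ to a gem. Once $G[M]$ is known to be a cograph, exhibiting twins is routine, and the cograph fact I use is exactly the $P_4$-free characterisation already invoked in \cref{lem:subcographs}.
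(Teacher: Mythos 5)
Your proof is correct and follows essentially the same route as the paper's: a nontrivial module $M$, a vertex universal to $M$ obtained from connectivity, the Gem obstruction forcing $G[M]$ to be $P_4$-free, and the fact that every cograph contains a pair of twins. The differences are cosmetic — you argue by contraposition where the paper argues by contradiction, and you justify the twins-in-cographs fact via the cotree rather than via \cref{thm:dec-cograph} — and your aside about the degenerate case $|V(G)|\le 2$ (where $K_2$ is prime yet not twin-free) is a legitimate edge case that the paper's own proof silently glosses over.
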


\begin{proof}
	Assume towards a contradiction that $G$ is twin-free but not prime. Hence,
	there exists a nontrivial module $M$ of $G$.
	Since $G$ is connected, there is a vertex $v$ universal to $M$. Since $G$
	is twin-free and $M$ is a module, $G[M]$ is also twin-free. In a cograph,
	there is always a pair of twins (a well-known property which can be deduced
	from \cref{thm:dec-cograph}), so
	$G[M]$ is not a cograph and, in particular, it contains an induced $P_4$.
	Since $v$ is universal to $M$, it is universal to the induced $P_4$ meaning
	$G$ has an induced subgraph isomorphic to the gem, a contradiction.

	Conversely, if $G$ is prime, it is twin-free because a pair of twins is a
	nontrivial module.
\end{proof}

\begin{proof}[Proof of \cref{thm:dec-DHcoDH}]
\cref{thm:prime-DHcoDH} gives possible prime graphs (see \cref{fig:P4Bull}),
\cref{lem:twin-free-connected-DH} explains that other graphs are only obtained
by adding twins, which corresponds to replacing vertices by cographs, and
\cref{thm:dec-cograph} describes cographs in terms of split decompositions.
\end{proof}

The following is an equivalent statement of \cref{thm:dec-DHcoDH}.

\begin{corollary}
	The connected graphs in \DHcoDH{} are exactly the connected graphs that admit a
	sequence of twin eliminations to an induced subgraph of the bull.
\end{corollary}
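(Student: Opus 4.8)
The plan is to prove the two inclusions separately, the central tool being that \DHcoDH{} is closed under the addition of twins. This closure follows from the two characterisations already available. On the one hand, distance-hereditary graphs are exactly the graphs reducible to $K_1$ by successive eliminations of twins and pendant vertices, so the class of distance-hereditary graphs is closed under adding a twin. On the other hand, if $G \in \DHcoDH{}$ and $G'$ is obtained by adding a twin $u$ of some vertex $v$, then in $\overline{G'}$ the vertices $u$ and $v$ are again twins (of the opposite type, a true twin becoming a false twin and conversely), so $\overline{G'}$ is just $\overline{G}$ with a twin added; since $\overline{G}$ is distance-hereditary, so is $\overline{G'}$. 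Hence $G' \in \DHcoDH{}$.

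For the forward direction, I would start from a connected graph $G \in \DHcoDH{}$ and repeatedly eliminate twins. Each elimination removes one vertex of a twin pair, so it preserves connectivity as well as membership in \DHcoDH{} (the class being hereditary), and since the number of vertices strictly decreases the process terminates at a twin-free connected graph $H \in \DHcoDH{}$. A connected twin-free graph on at most three vertices must be $K_1$ (since $K_2$, $P_3$, and $K_3$ all contain twins), while any connected twin-free distance-hereditary graph on at least four vertices is prime by \cref{lem:twin-free-connected-DH}, hence equal to $P_4$ or the Bull by \cref{thm:prime-DHcoDH}. As $K_1$, $P_4$, and the Bull are all induced subgraphs of the Bull, $H$ is an induced subgraph of the Bull, which is the desired reduction.

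For the backward direction, let $G$ be a connected graph admitting a sequence of twin eliminations $G = G_0, G_1, \dots, G_k = H$ with $H$ an induced subgraph of the Bull. The Bull lies in \DHcoDH{} and the class is hereditary, so $H \in \DHcoDH{}$. Reading the sequence in reverse, each $G_i$ is obtained from $G_{i+1}$ by adding a twin, so the closure property above yields $G_i \in \DHcoDH{}$ by induction, and in particular $G_0 = G \in \DHcoDH{}$; since $G$ is connected by hypothesis, it is a connected graph in \DHcoDH{}.

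I expect the only delicate point to be the twin-closure property, and especially its complement half: one must check that adding a twin interacts correctly with complementation so that the distance-hereditary status of $\overline{G}$ is preserved. The remaining care is bookkeeping in the forward direction, namely verifying that the twin-free endpoint of the reduction cannot be a two- or three-vertex graph other than $K_1$, so that \cref{thm:prime-DHcoDH} applies; this is routine.
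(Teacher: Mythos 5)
Your proof is correct, but it takes a genuinely different route from the paper. The paper presents this corollary as a reformulation of \cref{thm:dec-DHcoDH}: the twin-elimination statement is read off from the structure of the split decomposition (eliminating twins collapses leaves of $(T,\mu)$, and the allowed core structures---the edge $e$, or the nodes $s,s',s''$---correspond exactly to induced subgraphs of the Bull), so the corollary inherits its proof from the chain \cref{thm:prime-DHcoDH}, \cref{lem:twin-free-connected-DH}, \cref{thm:dec-cograph}. You instead bypass the split decomposition entirely and argue directly: forward, by running the twin-elimination process until it stalls and invoking \cref{lem:twin-free-connected-DH} and \cref{thm:prime-DHcoDH} to identify the twin-free endpoint; backward, by proving that \DHcoDH{} is closed under adding twins, using the elimination characterisation of distance-hereditary graphs together with the observation that complementation turns true twins into false twins and vice versa. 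Your approach buys self-containedness and makes explicit two points the paper leaves implicit: the twin-closure of the class (needed for the backward implication, and nowhere spelled out in the paper), and the small-graph bookkeeping---restricting \cref{lem:twin-free-connected-DH} and \cref{thm:prime-DHcoDH} to graphs on at least four vertices sidesteps the fact that, under the paper's definition of prime, $K_1$ and $K_2$ are prime members of \DHcoDH{} yet are neither $P_4$ nor the Bull. What the paper's route buys in exchange is the split-decomposition formulation itself, which is what feeds the linear-time recognition algorithm at the end; your argument, while cleaner as a proof of this particular statement, does not by itself recover \cref{thm:dec-DHcoDH}.
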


The case of disconnected graphs follows from forbidding the co-Gem as an induced subgraph.

\begin{observation}
	If $G$ is a disconnected graph in \DHcoDH{}, each connected component is a
	cograph. Hence, $G$ is a cograph.
\end{observation}

We may finally conclude on a complete characterisation of graphs in \DHcoDH{}.

\begin{theorem}\label{thm:modular-DHcoDH}
	Graphs in \DHcoDH{} are exactly graphs admitting a sequence of twin eliminations to an
	induced subgraph of the bull.

	Equivalently, graphs in \DHcoDH{} are exactly graphs whose modular
	decomposition has its root labelled by an induced subgraph of the bull,
	and other nodes are labelled cliques or independent sets.
\end{theorem}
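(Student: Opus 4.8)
The plan is to prove Theorem \ref{thm:modular-DHcoDH} by assembling the pieces already established, treating its two assertions (the twin-elimination characterisation and the modular-decomposition characterisation) as logically equivalent and proving the chain of implications that connects them to membership in \DHcoDH{}. The first assertion for connected graphs is exactly the Corollary, so the genuinely new content is (i) extending to disconnected graphs and (ii) translating the twin-elimination statement into the language of modular decomposition.

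For the disconnected case I would invoke the Observation immediately preceding the theorem: a disconnected graph in \DHcoDH{} is a cograph, and cographs are precisely the graphs whose modular decomposition is labelled only by cliques and independent sets. Since the empty graph (an independent set) is an induced subgraph of the Bull, such a modular decomposition has its root labelled by an induced subgraph of the Bull trivially. Conversely, a graph whose modular decomposition uses only cliques and independent sets is a cograph, hence lies in \DHcoDH{}. Thus for disconnected graphs both characterisations reduce to being a cograph, and the equivalence is immediate. This lets me focus on connected graphs, where the Corollary already gives that $G$ is in \DHcoDH{} if and only if it admits a sequence of twin eliminations to an induced subgraph of the Bull.

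The main work is therefore showing, for connected graphs, that admitting a sequence of twin eliminations to an induced subgraph of the Bull is equivalent to the stated modular-decomposition condition. I would argue via the root of the modular decomposition. Eliminating twins (and, by passing to connected components, adding true or false twins) corresponds exactly to refining strong modules that are labelled by cliques or independent sets; the quotient obtained after exhaustively removing all twins is the characteristic graph sitting at the root of the modular decomposition, and it is prime whenever it has more than one vertex. By \cref{thm:prime-DHcoDH} together with \cref{lem:twin-free-connected-DH}, a connected graph in \DHcoDH{} that is not reduced to a single vertex or a clique/independent set has as its prime quotient either $P_4$ or the Bull, each of which is an induced subgraph of the Bull; conversely the small non-prime quotients ($K_1$, $K_2$, $\overline{K_2}$) are also induced subgraphs of the Bull. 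Hence the root label is always an induced subgraph of the Bull, and all deeper nodes, being expansions into twin classes, must be cliques or independent sets.

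The step I expect to be the main obstacle is the precise dictionary between a sequence of twin eliminations and the shape of the modular decomposition, in particular the claim that the terminal object of twin elimination coincides with the root quotient graph. The subtlety is that eliminating one vertex of a twin pair collapses a whole strong module labelled by a clique or independent set, and one must check that doing this exhaustively leaves exactly the prime (or degenerate small) quotient at the root while forcing every internal node strictly below the root to be a clique or independent set. I would handle this by induction on the number of internal nodes of the modular decomposition, using the standard fact that the children of a non-degenerate (prime-labelled) node correspond to maximal strong submodules and that twins appear exactly inside clique- or independent-set-labelled nodes, so that no twin elimination can alter a prime root. Once this correspondence is made precise, the theorem follows by combining it with \cref{thm:prime-DHcoDH}, \cref{lem:twin-free-connected-DH}, and the cograph characterisation of the disconnected case.
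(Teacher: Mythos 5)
Your route---the disconnected case via the Observation, the connected case via the Corollary, and a dictionary between twin eliminations and the modular decomposition---is exactly the assembly the paper intends (the paper leaves this final step implicit), but one of your steps is concretely false. An independent set on four or more vertices is \emph{not} an induced subgraph of the Bull: the Bull has five vertices, independence number $3$, and clique number $3$. So your claim that a disconnected cograph ``has its root labelled by an induced subgraph of the Bull trivially'' fails already for the graph consisting of four isolated vertices, whose root quotient is $\overline{K_4}$; the same problem arises for connected cographs such as $K_4$, whose modular decomposition is a single series node labelled $K_4$. What this failure reveals is that the modular-decomposition formulation of \cref{thm:modular-DHcoDH} must be read (and proved) as: every node is labelled by a clique or an independent set, \emph{except} that the root may instead be labelled by $P_4$ or the Bull. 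Your proof should make this reading explicit rather than cover it with a false inclusion.

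The same degenerate case breaks your dictionary: it is not true that the graph obtained by exhaustive twin elimination is the root quotient---$K_4$ reduces by twin eliminations to $K_1$, while its root quotient is $K_4$ itself, and your list of ``small non-prime quotients ($K_1$, $K_2$, $\overline{K_2}$)'' misses that degenerate root quotients in the canonical modular decomposition are cliques or independent sets of arbitrary size. The identification you want holds exactly when $G$ is \emph{not} a cograph: then any maximal sequence of twin eliminations ends in a twin-free connected graph $H$ on at least three vertices (it cannot be $K_2$, which has twins, nor $K_1$, which would force $G$ to be a cograph), so $H$ is prime by \cref{lem:twin-free-connected-DH}, hence $P_4$ or the Bull by \cref{thm:prime-DHcoDH}; and since $G$ is obtained from the prime graph $H$ by substituting modules, $H$ is precisely the root quotient of $G$. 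To finish, you must also check that all non-root nodes are cliques or independent sets: for each maximal strong module $M$ of $G$, connectivity gives a vertex outside $M$ adjacent to $M$ and hence universal to $M$, so an induced $P_4$ inside $M$ would create a Gem; thus $G[M]$ is a cograph and its subtree is labelled by cliques and independent sets. So structure the proof as a case split---``$G$ is a cograph'' (connected or not, handled by the cotree fact and the Observation) versus ``$G$ is a connected non-cograph'' (handled as above)---and your outline goes through.
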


Since the bull is AT-free and the class of AT-free graphs is stable under the
addition of twins, we can conclude the following.

\begin{corollary}
	\DHcoDH{} is a subclass of the class DH $\cap$ AT-free.
\end{corollary}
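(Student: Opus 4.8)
The plan is to combine the structural characterisation of \cref{thm:modular-DHcoDH} with two hereditary-type properties of AT-freeness. Since membership in \DHcoDH{} already guarantees that $G$ is distance-hereditary, it suffices to prove that every graph in \DHcoDH{} is AT-free; the asserted inclusion into the class DH $\cap$ AT-free then follows immediately.

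First I would record the two ingredients. The base case is that every induced subgraph of the Bull is AT-free. AT-freeness is a hereditary property: an asteroidal triple in an induced subgraph $H$ of $G$ is still an asteroidal triple in $G$, since $H$ being induced gives $N_H[w] = N_G[w]\cap V(H)$ for every vertex $w$, so the connecting paths, living entirely in $V(H)$, avoid $N_G[w]$ as well. The Bull itself contains no asteroidal triple: each of its two degree-one vertices can leave only through its single cut-neighbour, so no triple admits all three required avoiding paths. The inductive step is that adding a twin to an AT-free graph yields an AT-free graph.

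The main work, and the step I expect to be the obstacle, is the twin-addition lemma. I would argue by contradiction: suppose $G = G' + v$ with $v$ a twin of some $u \in V(G')$ and $G'$ AT-free, yet $G$ contains an asteroidal triple $\{a,b,c\}$ together with its three connecting paths. Exploiting that $u$ and $v$ are interchangeable, namely that every vertex other than $u,v$ is adjacent to both or to neither, I would eliminate $v$ from the configuration to exhibit an asteroidal triple in $G'$. If $v \notin \{a,b,c\}$, each occurrence of $v$ on a connecting path is replaced by $u$, shortcutting if $u$ already lies on that path; since $u$ and $v$ share their neighbourhood outside $\{u,v\}$, the rerouted walks still avoid the relevant closed neighbourhoods and reduce to paths in $G'$, so $\{a,b,c\}$ is an asteroidal triple of $G'$. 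If instead $v$ is one of the three triple vertices, then $u$ is not, as a single vertex was added, and transferring the role of $v$ to $u$ yields an asteroidal triple $\{a,b,u\}$ in $G'$; here the verification uses that $N[u]$ and $N[v]$ coincide up to the edge $uv$, and the true-twin and false-twin cases must be checked separately to handle whether $u$ can lie on a path avoiding $N[v]$. In every case we contradict the AT-freeness of $G'$.

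Finally I would assemble the pieces. By \cref{thm:modular-DHcoDH}, every graph in \DHcoDH{} is obtained from an induced subgraph of the Bull by a sequence of twin additions. Starting from the AT-free base case and applying the twin-addition step along this sequence shows that every such graph is AT-free, and combined with distance-hereditarity this gives the desired inclusion. As a remark, one can bypass the twin-addition lemma entirely by comparing split decompositions: the structure described in \cref{thm:dec-DHcoDH} directly matches the DH $\cap$ AT-free characterisation of \cite{twwOne}, taking the path $P$ to be the $s$--$s'$ path through $s''$ in the non-cograph case and the single edge $e$ in the cograph case, since a star pointing towards a node of $P$ points towards $P$.
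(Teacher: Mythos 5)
Your proposal is correct and takes essentially the same route as the paper: the paper also deduces the corollary from \cref{thm:modular-DHcoDH} together with the two facts that the Bull is AT-free and that AT-freeness is stable under the addition of twins. The only difference is that you supply proofs of these two ingredients (and sketch an alternative via the split-decomposition characterisations), whereas the paper simply asserts them.
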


We conclude with some consequences on the recognition of graphs in \DHcoDH{}.
Since the split decomposition and the modular decomposition of a graph can be
computed in linear time \cite{SplitAlgo,McConnellS99,CournierH94}, we can either check that
a graph satisfies the conditions of \cref{thm:dec-DHcoDH} or is a cograph, or
check that it satisfies the conditions of \cref{thm:modular-DHcoDH}. Linear
time could already be obtained by checking separately if $G$ and its complement
$\overline{G}$ are distance-hereditary, see \cite{DuboisGM08}.

\begin{corollary}
	Graphs in \DHcoDH{} can be recognised in linear time.
\end{corollary}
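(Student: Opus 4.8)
The plan is to rely on the characterisation via modular decomposition given in \cref{thm:modular-DHcoDH}, since it reduces membership to a purely local check on the decomposition tree. First I would compute the modular decomposition of the input graph $G$, which can be done in linear time by the cited algorithms \cite{McConnellS99,CournierH94}. The decomposition algorithm already classifies each internal node as \emph{series} (its children form a clique, i.e. an independent set in the complement), \emph{parallel} (its children form an independent set), or \emph{prime} (its quotient graph is prime), so these labels can be read off directly with no additional work.

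Next I would verify the two conditions of \cref{thm:modular-DHcoDH}. For the non-root internal nodes the condition is that each is labelled by a clique or an independent set, that is, is series or parallel; this is checked by a single linear-time traversal of the tree, rejecting as soon as a non-root prime node is encountered. For the root, the condition is that its quotient graph is an induced subgraph of the Bull. Here the only observation needed is that the Bull has five vertices, so any induced subgraph of it has at most five vertices: if the root's quotient graph has more than five vertices the input is immediately rejected, and otherwise it is a graph on at most five vertices that can be compared against the finite list of induced subgraphs of the Bull in constant time. Correctness of the whole procedure is immediate from \cref{thm:modular-DHcoDH}, and its running time is dominated by computing the decomposition and by the single traversal, both linear.

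I do not expect a genuine obstacle, as this is a routine ``compute the decomposition, then check local conditions'' argument; the only point requiring care is precisely that the root's quotient graph need not be examined in full generality, being either small enough to test exhaustively or large enough to reject outright. For completeness one could equally argue through \cref{thm:dec-DHcoDH}: compute the canonical split decomposition in linear time \cite{SplitAlgo}, check that every internal node is labelled by a star or a clique, and then test the pointing conditions, either that all stars point toward a common edge (the cograph case of \cref{thm:dec-cograph}) or that they point toward the two distinguished star nodes and the clique possibly lying between them. Locating these distinguished nodes and verifying the pointing directions again reduces to a constant number of linear-time traversals of $T$, yielding the same conclusion.
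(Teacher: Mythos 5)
Your overall strategy is exactly the paper's: compute the modular decomposition (or the split decomposition) in linear time using the cited algorithms, then verify the conditions of \cref{thm:modular-DHcoDH} (respectively \cref{thm:dec-DHcoDH}) by a constant number of tree traversals. However, your implementation of the root test contains a genuine error: you reject whenever the root's quotient graph has more than five vertices. This rejects, for instance, $K_6$, whose modular decomposition consists of a single series node with six leaf children, so its root quotient is a clique on six vertices; yet $K_6$ is certainly in \DHcoDH{}, being distance-hereditary with an edgeless complement. The same failure occurs for every cograph whose root node has more than five children, in particular for all disconnected graphs in the class, whose root is a parallel node of arbitrarily large degree. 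The source of the trouble is that you read the root condition of \cref{thm:modular-DHcoDH} literally; but that formulation is only consistent with the first, twin-elimination formulation of the same theorem (which plainly accepts every cograph) if the root is also allowed to be a clique or an independent set of arbitrary size, the ``induced subgraph of the Bull'' requirement being the relevant constraint when the root is prime. Concretely, in view of \cref{thm:prime-DHcoDH}, the correct root test is: the root is series, parallel, or prime with quotient isomorphic to $P_4$ or the Bull.

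With that correction your primary route is sound and coincides with the paper's one-sentence proof, and the complexity analysis (linear-time decomposition plus linear-time traversals, with a constant-time isomorphism check on a bounded-size quotient) is fine. Your fallback route through the split decomposition has a smaller version of the same oversight: \cref{thm:dec-DHcoDH} applies only to connected graphs (note that $K_6$ passes it vacuously, since there are no star nodes), so a disconnected input must be handled separately; the paper does this by additionally accepting any graph that is a cograph, which is exactly what is needed since a disconnected graph in \DHcoDH{} is necessarily a cograph.
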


\bibliographystyle{alpha}
\bibliography{refs}

\end{document}